 \newtheorem{thm}{Theorem}[section]
 \newtheorem{cor}[thm]{Corollary}
 \newtheorem{lem}[thm]{Lemma}
 \newtheorem{prop}[thm]{Proposition}
 \theoremstyle{definition}
 \theoremstyle{remark}
 \numberwithin{equation}{section}
\begin{document}

%
%
%
%
%
%
%
%
%

\title[Harnack Estimate For Positive Solutions to a Nonlinear Equation]
 {Harnack Estimate For Positive Solutions to a Nonlinear Equation Under Geometric Flow}

\author[Gh. Fasihi]{Gh. Fasihi-Ramandi}

\address{%
Imam Khomeini International University\\
Noroozian Avenue\\
Qazvin\\
Iran}

\email{fasihi@sci.ikiu.ac.ir}

\author{Sh. Azami}
\address{Imam Khomeini International University\br
Qazvin\br
Iran}
\email{azami@sci.ikiu.ac.ir}
\subjclass{	51Mxx, 51Mxx}

\keywords{Geometric Flow, Harnack Estimate, Nonlinear Parabolic Equations.}


\begin{abstract}
In the present paper, we obtain some gradient estimates for positive solutions to the following nonlinear parabolic equation under general geometric flow on complete noncompact manifolds
\[\dfrac{\partial u}{\partial t}=\triangle u +a(x,t)u^p +b(x,t)u^q\]
where, $0<p,q<1$ are real constants and $a(x,t)$ and $b(x,t)$ are functions which are $C^2$ in the $x$-variable and $C^1$ in the $t$-variable. We shall get an interesting Harnack inequality as an application.
\end{abstract}

\maketitle
\section{Introduction and Main Results}
Starting with the pioneering work of P. Li and S. T. Yau in the paper \cite{Li}, gradient estimates are also called differential Harnack inequalities, because one can obtain the classical Harnack inequality after integrating the gradient estimate along paths in space-time. These concepts are very powerful tools in geometric analysis. For example, R. Hamilton established differential Harnack inequalities for the mean curvature along the mean curvature flow and for  the scalar curvature along the Ricci flow. Both have important applications in the analysis of singularities. \\

In Perelman's work on the Poincare's conjecture and the geometrization conjecture, differential Harnack inequality played an important role. Since then, there have been many works on gradient estimates along the Ricci flow or the conjugate Ricci flow for the solution of the heat equation or the conjugate heat equation; examples include (\cite{4}, \cite{6}). Later, Sun \cite{lem} extended these results to general geometric flow.\\

In the present paper, we study the following nonlinear parabolic equation under general geometric flow on complete noncompact manifolds $M$,
\begin{equation}\label{equ1}
\dfrac{\partial u}{\partial t}=\triangle u +a(x,t)u^p +b(x,t)u^q
\end{equation}
where, $0<p,q<1$ are real constants and $a(x,t)$ and $b(x,t)$ are functions which are $C^2$ in the $x$-variable and $C^1$ in the $t$-variable. Before presenting our main results about the equation, it seems necessary to support our idea of considering this equation. If $a(x,t)$ and $b(x,t)$ are identically zero, the (\ref{equ1}) is the heat equation. As $p\to 1$ and $b(x,t)=0$, the conjugate heat equation
\[\triangle u-Ru+\partial_t u=0,\]
becomes a special case of (\ref{equ1}), where $R$ stands for the scalar curvature. In bio-mathematics, the following equation 
\[\dfrac{\partial u}{\partial t}=\triangle u +a(x,t)u^p, \qquad p>0,\]
could be interpreted as the population dynamics. Also, similar equation arises in the study of conformally deformation of the
scalar curvature on a manifolds (See \cite{wu}, equation (1.4)). 

Now we present our main results about the equation (\ref{equ1}) as follows.
\begin{prop}\label{maintheorem}
Suppose $(M,g(t))$ be the family of Riemannian manifolds mentioned before.  Let $M$ be complete under the initial metric
g(0). Given $x_0\in M$, and $M_1, R > 0$, let $u$ be a positive solution to the nonlinear
equation $(\ref{equ1})$ with $u\leq  M_1$ in the cube $Q_{2R,T} = \{(x, t) | d(x, x_0, t) \leq 2R, 0 \leq
t \leq T \}$. Suppose that there exist constants $K_1, K_2, K_3, K_4 \geq 0$ such that 
\[Ric \geq −K_1 g,\quad -K_2 g  \leq s \leq K_3 g,\quad |\nabla s| \leq K_4\]
 on $Q_{2R,T}$. Moreover, assume that there exist positive constants $\theta_a,\theta_b,\gamma_a,\gamma_b$ such that $\triangle a \leq \theta_a$, $|\nabla a| \leq  \gamma_a$, $\triangle b \leq \theta_b$ and $|\nabla b| \leq  \gamma_b$ in $Q_{2R,T}$. Then we have
$$\beta \dfrac{|\nabla u|^2}{u^2}+au^{p-1}+bu^{q-1}-\dfrac{u_t}{u}
\leq H_1+H_2+\dfrac{n}{\beta}\dfrac{1}{t},$$
where,
\begin{align*}
H_1&=\dfrac{n}{\beta} \Big( \dfrac{(n-1)(1+\sqrt{K_1}R)c_1^2+c_2+2c_1^2}{R^2}+\sqrt{c_3}K_2+|a|(1-p)M_1^{(p-1)}\\&
+|b|(1-q)M_1^{(q-1)}  +\dfrac{n c_1^2 }{2R^2(\beta -\beta^2)}\Big)
\end{align*}
\begin{align*}
H_2 &=\Big[\dfrac{n^2 }{4\beta^2 (1-\beta)^2}\big( 2(1-\beta )K_3 +2\beta K_1 +\dfrac{3}{2}K_4 \big)^2\\
&+\dfrac{n}{\beta}\{M_1^{(p-1)}\theta_a + M_1^{(q-1)}\theta_b
+n \big( \dfrac{1}{\beta}(K_2 +K_3)^2 +\dfrac{3}{2}K_4\big)\}\\
&-\dfrac{n}{\beta}\{ \dfrac{[
 (p-\beta)M_1^{(p-1)}\gamma_a +(q-\beta)M_1^{(q-1)}\gamma_b]^2}{|a|(p-\beta)(p-1)M_1^{(p-1)}+|b|(q-\beta)(q-1)M_1^{(q-1)}}\} \Big]^{1\over 2}
\end{align*}
\end{prop}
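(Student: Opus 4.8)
The plan is to run the Li--Yau--Hamilton maximum principle scheme adapted to the flow $\partial_t g_{ij} = 2s_{ij}$, where $s$ is the symmetric $2$-tensor whose bounds $-K_2 g \le s \le K_3 g$, $|\nabla s| \le K_4$ enter the estimate. First I would pass to the logarithmic variable $f = \log u$. Since $u > 0$ solves $(\ref{equ1})$ and $\frac{|\nabla u|^2}{u^2} = |\nabla f|^2$, $\frac{u_t}{u} = f_t$, a direct computation gives
\[f_t = \Delta f + |\nabla f|^2 + a u^{p-1} + b u^{q-1}.\]
Introducing
\[w := \beta|\nabla f|^2 + a u^{p-1} + b u^{q-1} - f_t = (\beta-1)|\nabla f|^2 - \Delta f,\]
the proposition is exactly the upper bound $w \le H_1 + H_2 + \frac{n}{\beta t}$, so the whole argument reduces to estimating $w$ from above.

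Next I would apply the operator $(\Delta - \partial_t)$ to $w$. The two essential ingredients are the Bochner formula $\Delta|\nabla f|^2 = 2|\nabla^2 f|^2 + 2\langle\nabla f, \nabla\Delta f\rangle + 2\,\mathrm{Ric}(\nabla f,\nabla f)$, and the commutation identity for $[\partial_t,\Delta]$ under the flow, which produces terms in $s$, its trace and $\nabla s$; these are controlled by $K_1, K_2, K_3, K_4$. Because $\Delta f = f_t - |\nabla f|^2 - au^{p-1} - bu^{q-1}$, the nonlinear contributions enter through $\Delta(au^{p-1})$, $\langle\nabla f, \nabla(au^{p-1})\rangle$ and the analogous $b$-terms: using $\Delta a \le \theta_a$, $|\nabla a| \le \gamma_a$, $0 < u \le M_1$ and $p-1, q-1 < 0$, each is bounded by an expression in $M_1^{p-1}, M_1^{q-1}, \theta_a, \theta_b, \gamma_a, \gamma_b$. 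It is crucial here to retain the good term $2|\nabla^2 f|^2 \ge \frac{2}{n}(\Delta f)^2$, since this is what eventually dominates the quadratic growth of $w$.

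To handle noncompactness I would localize with a cutoff $\phi(x,t) = \bar\phi\!\big(d(x,x_0,t)/R\big)$, with $\bar\phi \equiv 1$ on $[0,1]$, $\bar\phi \equiv 0$ on $[2,\infty)$, and the standard bounds $\frac{|\bar\phi'|^2}{\bar\phi} \le c_1$, $-c_2 \le \bar\phi'' \le 0$, $\bar\phi' \le 0$. The Laplacian comparison theorem under $\mathrm{Ric} \ge -K_1 g$ then bounds $\Delta\phi$, producing the $\frac{(n-1)(1+\sqrt{K_1}R)c_1^2 + c_2 + 2c_1^2}{R^2}$ contribution to $H_1$, while $\partial_t\phi$ is controlled through $\partial_t d$ by the bounds on $s$, giving the $\sqrt{c_3}\,K_2$ term. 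Setting $G := t\phi w$ and taking an interior maximum point $(x_1,t_1)$ of $G$ over $Q_{2R,T}$ (if $G \le 0$ there, or if the maximum occurs at $t=0$, the estimate is immediate, so assume $G(x_1,t_1)>0$), the conditions $\nabla G = 0$, $\Delta G \le 0$, $\partial_t G \ge 0$ convert $\phi\nabla w = -w\nabla\phi$ and the sign data into a pointwise algebraic inequality for $G$.

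The decisive and hardest step is the final assembly: all of the above must be collected into a single inequality of the form $0 \le -\frac{c}{t_1}G^2 + B G + C$ at $(x_1,t_1)$, with $B$ and $C$ explicit, after which solving this quadratic and taking the positive root yields precisely the split $H_1 + H_2 + \frac{n}{\beta t}$, the square root in $H_2$ being the discriminant. The bookkeeping is delicate in two places. The cross terms $\langle\nabla f,\nabla(au^{p-1})\rangle$ and $\langle\nabla f,\nabla(bu^{q-1})\rangle$ must be absorbed by Young's inequality in exactly the way that generates the subtracted fraction inside $H_2$, whose denominator collects $(p-\beta)(p-1)M_1^{p-1}$ and $(q-\beta)(q-1)M_1^{q-1}$; and the coefficient of $(\Delta f)^2$ coming from Bochner must survive all these absorptions with the correct $\beta$- and $(1-\beta)$-weights so that the negative quadratic coefficient $\tfrac{c}{t_1}$ remains, which is what closes the estimate. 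Once the quadratic inequality is established, dividing by $t\phi$ (which equals $1$ on $Q_{R,T}$) and reading off the constants gives the stated bound.
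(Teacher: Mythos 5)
Your proposal follows essentially the same route as the paper's proof: the logarithmic substitution, the Harnack quantity $F=t(\beta|\nabla\log u|^2+au^{p-1}+bu^{q-1}-(\log u)_t)$, the Bochner formula combined with the commutation of $\partial_t$ and $\Delta$ under the flow (the paper's Lemma 2.1 and Lemma 2.2), the Calabi-type cutoff with Laplacian comparison and the $\sqrt{c_3}K_2$ control of $\varphi_t$, and the maximum-point quadratic inequality whose positive root yields $H_1+H_2+\frac{n}{\beta t}$. The plan is correct and matches the paper step for step, including the Young-inequality absorption of the $\langle\nabla\log u,\nabla a\rangle$ and $\langle\nabla\log u,\nabla b\rangle$ cross terms that produces the subtracted fraction inside $H_2$.
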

When $R$ approaches to infinity, we can get the global Li-Yau type gradient estimates (see \cite{Li}) for equation (\ref{equ1}) as follows.
\begin{cor}\label{corol}
Let $(M, g(0))$ be a complete noncompact Riemannian manifold without boundary, and suppose that $g(t)$ evolve by $\dfrac{\partial g_{ij}}{\partial t}=2s_{ij}$ for $t\in [0, T ]$ and satisfy 
\[
Ric\geq −K_1 g,\quad −K_2 g\leq s \leq K_3g,\quad |\nabla s| \leq K_4.
\]
Also, assume that $\triangle a \leq \theta_a$, $\triangle b \leq \theta_b$, $|\nabla a| \leq \gamma_a$ and $|\nabla b| \leq \gamma_b$ in $M \times [0, T )$ for some constants $\theta_a ,\theta_b , \gamma_a$ and $\gamma_b$. Let $u$ be a positive solution of (\ref{equ1}) with $u \leq M_1$. Then  for any constant $0 < \beta < 1$, if $\beta < p,q < 1$, we have
\[\beta \dfrac{|\nabla u|^2}{u^2}+au^{p-1}+bu^{q-1}-\dfrac{u_t}{u}
\leq \overline{H_1}+H_2+\dfrac{n}{\beta}\dfrac{1}{t},\]
where
\[
\overline{H_1}=\dfrac{n}{\beta}\big(\sqrt{c_3}K_2+ |a|(1-p)M_1^{(p-1)} +|b|(1-q)M_1^{(q-1)} \big)
\]
\end{cor}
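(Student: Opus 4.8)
The plan is to deduce the corollary directly from Proposition \ref{maintheorem} by a limiting argument, sending the radius $R$ of the cube $Q_{2R,T}$ to infinity. First I would fix an arbitrary point $(x,t) \in M \times (0,T]$. Since $(M,g(0))$ is complete and noncompact, for every $R>0$ the cube $Q_{2R,T}$ is a well-defined subset of $M \times [0,T]$, and for all sufficiently large $R$ the chosen point $(x,t)$ lies in its interior. The hypotheses of the corollary — the bounds $Ric \geq -K_1 g$, $-K_2 g \leq s \leq K_3 g$, $|\nabla s| \leq K_4$, together with $\triangle a \leq \theta_a$, $\triangle b \leq \theta_b$, $|\nabla a| \leq \gamma_a$, $|\nabla b| \leq \gamma_b$ and $u \leq M_1$ — are assumed on all of $M \times [0,T)$, hence they hold in particular on every $Q_{2R,T}$; moreover the constraint $\beta < p,q < 1$ guarantees the positivity of the factors $(p-\beta)$ and $(q-\beta)$ occurring in $H_2$, so the denominator there is meaningful. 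Consequently Proposition \ref{maintheorem} applies for each such $R$, yielding at the fixed point $(x,t)$ the inequality
\[
\beta \frac{|\nabla u|^2}{u^2} + au^{p-1} + bu^{q-1} - \frac{u_t}{u} \leq H_1(R) + H_2 + \frac{n}{\beta}\frac{1}{t}.
\]

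Next I would isolate the $R$-dependence of the right-hand side. Both $H_2$ and the term $\frac{n}{\beta t}$ contain no occurrence of $R$, so they are unaffected by the limit. Only $H_1$ depends on $R$, and it does so exclusively through the three contributions
\[
\frac{(n-1)(1+\sqrt{K_1}R)c_1^2}{R^2}, \qquad \frac{c_2 + 2c_1^2}{R^2}, \qquad \frac{n c_1^2}{2R^2(\beta-\beta^2)},
\]
each of which tends to $0$ as $R \to \infty$: the second and third are $O(R^{-2})$, while the first splits as an $O(R^{-2})$ piece plus the $O(R^{-1})$ piece $(n-1)\sqrt{K_1}c_1^2/R$. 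The remaining summands of $H_1$, namely $\sqrt{c_3}K_2 + |a|(1-p)M_1^{p-1} + |b|(1-q)M_1^{q-1}$, are constant in $R$.

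Finally I would pass to the limit. Since the left-hand side is independent of $R$, letting $R \to \infty$ in the displayed inequality gives
\[
\beta \frac{|\nabla u|^2}{u^2} + au^{p-1} + bu^{q-1} - \frac{u_t}{u} \leq \overline{H_1} + H_2 + \frac{n}{\beta}\frac{1}{t},
\]
where $\overline{H_1} = \lim_{R\to\infty} H_1(R) = \frac{n}{\beta}\big(\sqrt{c_3}K_2 + |a|(1-p)M_1^{p-1} + |b|(1-q)M_1^{q-1}\big)$, which is exactly the asserted global bound; since $(x,t)$ was arbitrary, the estimate holds throughout $M \times (0,T]$. The only point requiring care — and the step I would regard as the main, though mild, obstacle — is to confirm that the constants $c_1, c_2, c_3$ and the entire expression $H_2$ produced by the cutoff-function construction in the proof of Proposition \ref{maintheorem} are genuinely independent of $R$, so that the limit is clean. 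This is ensured provided these constants arise only from the fixed cutoff profile and the a priori geometric bounds $K_1,\dots,K_4,\theta_a,\theta_b,\gamma_a,\gamma_b$, and not from the scale of the cube itself.
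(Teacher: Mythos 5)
Your argument is exactly the paper's: the corollary is obtained by applying Proposition \ref{maintheorem} for each $R$ and letting $R\to\infty$, under which the three $R$-dependent terms in $H_1$ (all $O(R^{-1})$ or $O(R^{-2})$) vanish and $H_1$ tends to $\overline{H_1}$, while $c_1,c_2,c_3$ and $H_2$ are independent of $R$. Your write-up just makes explicit the limiting details the paper leaves implicit; the approach is the same and correct.
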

As an application, we get the following Harnack inequality.
\begin{prop}\label{mtheorem}
Let $(M, g(0))$ be a complete noncompact Riemannian manifold without boundary, and suppose that $g(t)$ evolve by $\dfrac{\partial g_{ij}}{\partial t}=2s_{ij}$ for $t\in [0, T ]$ and satisfy 
\[
Ric\geq −K_1 g,\quad −K_2 g\leq s \leq K_3g,\quad |\nabla s| \leq K_4.
\]
Also, assume that $\triangle a \leq \theta_a$, $\triangle b \leq \theta_b$, $|\nabla a| \leq \gamma_a$ and $|\nabla b| \leq \gamma_b$ in $M \times [0, T )$ for some constants $\theta_a ,\theta_b , \gamma_a$ and $\gamma_b$. Let $u(x,t)$ be a positive solution of (\ref{equ1}) in $M\times [0,T)$ where, $a$ and $b$ are positive constants. Then  for any constant $0 < \beta < 1$, if $\beta < p,q < 1$, for any points $(x_1,t_1)$ and $(x_2,t_2)$ on $M\times [0,T)$ with $0<t_1<t_2$, we have the following Harnack inequality,
\[
u(x_1,t_1)\leq u(x_2,t_2)(\dfrac{t_2}{t_1} )^{n\over \beta}e^{\Psi (x_1,x_2,t_1,t_2)+(\overline{H_1}+H_2)(t_2-t_1)}
\]
where $\Psi (x_1,x_2,t_1,t_2)=\inf_{\gamma}\int_{t_1}^{t_2} \dfrac{1}{4\beta}|\gamma'|^2 dt$, and $\overline{H_1}=\dfrac{n}{\beta}\big(\sqrt{c_3}K_2+ a(1-p)M_1^{(p-1)} +b(1-q)M_1^{(q-1)} \big)$, and
\begin{align*}
H_2 &=\Big[\dfrac{n^2 }{4\beta^2 (1-\beta)^2}\big( 2(1-\beta )K_3 +2\beta K_1 +\dfrac{3}{2}K_4 \big)^2\\
&+\dfrac{n}{\beta}\{M_1^{(p-1)}\theta_a + M_1^{(q-1)}\theta_b
+n \big( \dfrac{1}{\beta}(K_2 +K_3)^2 +\dfrac{3}{2}K_4\big)\}\\
&-\dfrac{n}{\beta}\{ \dfrac{[
 (p-\beta)M_1^{(p-1)}\gamma_a +(q-\beta)M_1^{(q-1)}\gamma_b]^2}{a(p-\beta)(p-1)M_1^{(p-1)}+b(q-\beta)(q-1)M_1^{(q-1)}}\} \Big]^{1\over 2}
\end{align*}
\end{prop}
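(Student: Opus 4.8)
The plan is to deduce this Harnack inequality from the global gradient estimate of Corollary \ref{corol} by the classical Li--Yau integration-along-paths technique. First I would pass to the logarithm: setting $f=\log u$, so that $\nabla f=\nabla u/u$ and $f_t=u_t/u$, the estimate of Corollary \ref{corol} reads
\[
\beta|\nabla f|^2+au^{p-1}+bu^{q-1}-f_t\leq \overline{H_1}+H_2+\frac{n}{\beta}\frac1t.
\]
Because $a,b>0$ and $0<p,q<1$ force $au^{p-1}+bu^{q-1}>0$ for the positive solution $u$, these two terms may be discarded, yielding the one-sided differential inequality
\[
f_t\geq \beta|\nabla f|^2-\overline{H_1}-H_2-\frac{n}{\beta}\frac1t,
\]
which is exactly the Li--Yau-type bound needed to run the path argument.

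Next I would fix the two space-time points $(x_1,t_1)$ and $(x_2,t_2)$ with $t_1<t_2$, choose any smooth curve $\gamma:[t_1,t_2]\to M$ with $\gamma(t_1)=x_1$ and $\gamma(t_2)=x_2$, and consider the function $t\mapsto f(\gamma(t),t)$. Differentiating along the curve gives $\tfrac{d}{dt}f(\gamma(t),t)=\nabla f\cdot\gamma'+f_t$, so that
\[
\log\frac{u(x_1,t_1)}{u(x_2,t_2)}=-\int_{t_1}^{t_2}\big(\nabla f\cdot\gamma'+f_t\big)\,dt.
\]
Substituting the one-sided bound for $f_t$ and completing the square in the combination $\beta|\nabla f|^2+\nabla f\cdot\gamma'$, namely
\[
\beta|\nabla f|^2+\nabla f\cdot\gamma'=\beta\Big|\nabla f+\frac{\gamma'}{2\beta}\Big|^2-\frac{|\gamma'|^2}{4\beta}\geq-\frac{|\gamma'|^2}{4\beta},
\]
the $\nabla f$-terms are eliminated and one obtains the pointwise lower bound $\nabla f\cdot\gamma'+f_t\geq-\tfrac{|\gamma'|^2}{4\beta}-\overline{H_1}-H_2-\tfrac{n}{\beta t}$.

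Inserting this into the integral identity and integrating term by term, using $\int_{t_1}^{t_2}\tfrac{n}{\beta t}\,dt=\tfrac{n}{\beta}\log\tfrac{t_2}{t_1}$ and $\int_{t_1}^{t_2}(\overline{H_1}+H_2)\,dt=(\overline{H_1}+H_2)(t_2-t_1)$, gives
\[
\log\frac{u(x_1,t_1)}{u(x_2,t_2)}\leq\int_{t_1}^{t_2}\frac{|\gamma'|^2}{4\beta}\,dt+\frac{n}{\beta}\log\frac{t_2}{t_1}+(\overline{H_1}+H_2)(t_2-t_1).
\]
Taking the infimum over all admissible paths $\gamma$ converts the first term into $\Psi(x_1,x_2,t_1,t_2)$, and exponentiating yields precisely the claimed inequality. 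The point requiring the most care is the interpretation of $|\gamma'|$ under the evolving metric $g(t)$: the norm must be measured with respect to $g$ at the corresponding time, so the energy $\int\tfrac{1}{4\beta}|\gamma'|^2\,dt$ and hence $\Psi$ are defined using the time-dependent metric, and one should check that the infimum is approached along smooth curves so that no regularity is lost in the limiting procedure. Since the estimate of Corollary \ref{corol} holds on all of $M\times[0,T)$, the path argument is globally valid and no cutoff is needed here.
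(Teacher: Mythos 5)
Your proposal is correct and follows essentially the same route as the paper: integrate the global gradient estimate of Corollary \ref{corol} along a space-time path joining the two points, complete the square in $\beta|\nabla\log u|^2+\nabla\log u\cdot\gamma'$ to produce the $-|\gamma'|^2/(4\beta)$ term, discard the positive terms $au^{p-1}+bu^{q-1}$, and take the infimum over paths before exponentiating. The only (immaterial) difference is that you drop the $au^{p-1}+bu^{q-1}$ terms at the outset while the paper carries them into the integral and discards them in the final step; your added remark on measuring $|\gamma'|$ in the time-dependent metric is a point the paper leaves implicit.
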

\section{Methods and Proofs}

Let $u$ be a positive solution to (\ref{equ1}). Let $w=\ln u$, then a simple computation shows that $w$ satisfies the following equation
\begin{equation}\label{eq2}
w_t= \triangle w+|\nabla w|^2+ae^{(p-1)w}+be^{(q-1)w}
\end{equation}

Let $(M,g(t))$ be a smooth 1-parameter family of complete Riemannian metrics on a manifold $M$ evolving by equation
\begin{equation}\label{var}
\dfrac{\partial g_{ij}}{\partial t}=2s_{ij}
\end{equation}
for $t$ in some time interval $[0,T]$. We need the following lemmas to prove our main theorem.
\begin{lem}(\cite{lem})
If the metric evolves by (\ref{var}) then for any smooth function $w$, we have
\[
\dfrac{\partial}{\partial t}|\nabla w|^2= -2s(\nabla w,\nabla w)+2\nabla w \nabla w_t
\]
and,
\[ \dfrac{\partial}{\partial t}\triangle w =\triangle w_t -2s \nabla^2 w -2\nabla w \big( \mathrm{div} s-\dfrac{1}{2}\nabla (tr_g s)\big)
\]
where, $\mathrm{div} s$ denotes the divergence of $s$.
\end{lem}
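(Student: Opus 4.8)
The plan is to prove both identities by direct computation in local coordinates, using only the evolution equation $\partial_t g_{ij}=2s_{ij}$ together with the standard first-variation formula for the Christoffel symbols. The single algebraic fact underlying everything is the evolution of the inverse metric: differentiating the identity $g^{ik}g_{kj}=\delta^i_j$ in $t$ and solving gives $\partial_t g^{ij}=-2s^{ij}$, where the indices on $s$ are raised with $g(t)$.

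For the first identity I would write $|\nabla w|^2=g^{ij}\,\partial_i w\,\partial_j w$ and apply the product rule in $t$. Since $w_t=\partial_t w$ commutes with the spatial coordinate derivatives, the two terms in which $\partial_t$ lands on a factor $\partial w$ combine to $2g^{ij}\,\partial_i w_t\,\partial_j w=2\nabla w\,\nabla w_t$, while the term in which $\partial_t$ hits the metric yields $(\partial_t g^{ij})\,\partial_i w\,\partial_j w=-2s^{ij}\,\partial_i w\,\partial_j w=-2s(\nabla w,\nabla w)$. Adding these two contributions gives the claimed formula with no further manipulation.

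For the second identity I would expand $\triangle w=g^{ij}\big(\partial_i\partial_j w-\Gamma^k_{ij}\partial_k w\big)$ and differentiate in $t$, then group the result into three blocks. The factor $(\partial_t g^{ij})\nabla_i\nabla_j w=-2s^{ij}\nabla_i\nabla_j w=-2s\,\nabla^2 w$ supplies the Hessian term; the terms in which $\partial_t$ differentiates $w$ reassemble into $g^{ij}\nabla_i\nabla_j w_t=\triangle w_t$; and the remaining piece is $-g^{ij}(\partial_t\Gamma^k_{ij})\partial_k w$. Here I invoke the standard variation formula $\partial_t\Gamma^k_{ij}=\tfrac12 g^{kl}\big(\nabla_i\dot g_{jl}+\nabla_j\dot g_{il}-\nabla_l\dot g_{ij}\big)$ with $\dot g_{ij}=2s_{ij}$, so that $\partial_t\Gamma^k_{ij}=g^{kl}\big(\nabla_i s_{jl}+\nabla_j s_{il}-\nabla_l s_{ij}\big)$. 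Contracting with $g^{ij}$ and using $g^{ij}\nabla_i s_{jl}=(\mathrm{div}\,s)_l$ and $g^{ij}\nabla_l s_{ij}=\nabla_l(tr_g\,s)$ gives $g^{ij}\partial_t\Gamma^k_{ij}=g^{kl}\big(2(\mathrm{div}\,s)_l-\nabla_l(tr_g\,s)\big)$, so the last block equals $-2\nabla w\big(\mathrm{div}\,s-\tfrac12\nabla(tr_g\,s)\big)$. Collecting the three blocks reproduces the stated expression exactly.

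The only genuinely non-routine ingredient is the first-variation formula for the Christoffel symbols; I would either cite it as standard or derive it in one line by differentiating $\Gamma^k_{ij}=\tfrac12 g^{kl}(\partial_i g_{jl}+\partial_j g_{il}-\partial_l g_{ij})$ and converting the coordinate derivatives of $\dot g$ into covariant derivatives, the correction terms cancelling because $\nabla g\equiv 0$. Beyond this, the main point requiring care is the index bookkeeping, together with the observation that $\mathrm{div}\,s$ and $tr_g\,s$ are taken with respect to the time-dependent metric; no analytic estimates enter, so once the variation formula is in hand the argument is purely computational.
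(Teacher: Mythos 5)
Your computation is correct and complete: the evolution $\partial_t g^{ij}=-2s^{ij}$, the first-variation formula for the Christoffel symbols, and the contractions identifying $\mathrm{div}\,s$ and $\nabla(tr_g\,s)$ are all applied with the right signs, and the three blocks reassemble into exactly the stated identities. Note that the paper itself gives no proof of this lemma — it simply cites Sun's paper — and your coordinate computation is precisely the standard derivation found in that reference, so there is nothing to add beyond the care you already took that $\mathrm{div}\,s$ and $tr_g\,s$ are taken with respect to the evolving metric $g(t)$.
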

\begin{lem}\label{lem}
Assume that $(M, g(t))$ satisfies the hypotheses of Proposition (\ref{maintheorem}). Then for any constant $0 < \beta < 1$ and $(x, t) \in Q_{R,T}$,if $\beta < p,q < 1$, we have
\begin{align*}
(\triangle -\frac{\partial}{\partial t})F\\
&\geq -2\nabla w\nabla F+t\Big\{\dfrac{\beta}{n}(w_t-|\nabla w |^2-ae^{(p-1)w} -be^{(q-1)w})^{2}\\
&+[a(p-\beta)(p-1)e^{(p-1)w}+b(q-\beta)(q-1)e^{(q-1)w}+2(\beta -1)K_3\\
 &-2\beta K_1 -\dfrac{3}{2}K_4]|\nabla w|^2 
+2(p-\beta)e^{(p-1)w}\nabla w\nabla a+2(q-\beta)e^{(q-1)w}\nabla w\nabla b\\
&+e^{(p-1)w}\triangle a+e^{(q-1)w}\triangle b
-n\big( \dfrac{1}{\beta}(K_2 +K_3)^2 +\dfrac{3}{2}K_4\big)
\Big\}\\
&-a(p-1)e^{(p-1)w}F-b(q-1)e^{(q-1)w}F-\frac{F}{t}
\end{align*}
where
\[F=t\big(\beta |\nabla w|^2+ae^{(p-1)w}+be^{(q-1)w}-w_t \big)
\]
\end{lem}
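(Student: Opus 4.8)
The plan is to exploit the simplification that equation (\ref{eq2}) forces on $F$. Writing $f := \beta|\nabla w|^2 + ae^{(p-1)w} + be^{(q-1)w} - w_t$ so that $F = tf$, I would first substitute $w_t = \triangle w + |\nabla w|^2 + ae^{(p-1)w} + be^{(q-1)w}$ from (\ref{eq2}) to collapse this into the much more tractable $f = (\beta-1)|\nabla w|^2 - \triangle w$. Since $t$ is spatially constant, $\triangle F = t\triangle f$ and $\partial_t F = f + t\partial_t f$, whence $(\triangle - \partial_t)F = t(\triangle - \partial_t)f - F/t$. This already produces the trailing $-F/t$ and reduces the whole statement to a lower bound for $(\triangle - \partial_t)f$.

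Next I would compute $(\triangle - \partial_t)|\nabla w|^2$ and $(\triangle - \partial_t)\triangle w$ separately. For the first I combine the Bochner formula $\triangle|\nabla w|^2 = 2|\nabla^2 w|^2 + 2\nabla w\cdot\nabla\triangle w + 2\,Ric(\nabla w,\nabla w)$ with the first evolution identity of the preceding lemma, and substitute $\nabla\triangle w$ from (\ref{eq2}) so that the $\nabla w\cdot\nabla w_t$ terms cancel. For the second I use the second evolution identity together with $\triangle w_t$ obtained by applying $\triangle$ to (\ref{eq2}); the fourth-order terms $\triangle^2 w$ cancel, leaving $-\triangle|\nabla w|^2$, the reaction Laplacians $\triangle(ae^{(p-1)w})$ and $\triangle(be^{(q-1)w})$, and the two curvature couplings $2s\cdot\nabla^2 w$ and $2\nabla w\cdot(\mathrm{div}\, s - \tfrac{1}{2}\nabla(tr_g s))$. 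Assembling $(\triangle - \partial_t)f = (\beta-1)(\triangle-\partial_t)|\nabla w|^2 - (\triangle-\partial_t)\triangle w$ and re-expanding the surviving $\triangle|\nabla w|^2$ by Bochner, the Hessian coefficient consolidates to $2\beta|\nabla^2 w|^2$ and the Ricci term to $2\beta\,Ric(\nabla w,\nabla w)$.

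The central step is reorganizing the first-order terms. The combination $-2\beta\nabla w\cdot\nabla|\nabla w|^2 - 2\nabla w\cdot\nabla(ae^{(p-1)w}) - 2\nabla w\cdot\nabla(be^{(q-1)w}) + 2\nabla w\cdot\nabla w_t$ equals $-2\nabla w\cdot\nabla f$ up to the discrepancy $2(1-\beta)\nabla w\cdot[\nabla(ae^{(p-1)w}) + \nabla(be^{(q-1)w})]$; after multiplying by $t$ the first piece becomes the leading $-2\nabla w\cdot\nabla F$. Expanding $\triangle(ae^{(p-1)w}) = e^{(p-1)w}\triangle a + 2(p-1)e^{(p-1)w}\nabla a\cdot\nabla w + a(p-1)e^{(p-1)w}\triangle w + a(p-1)^2 e^{(p-1)w}|\nabla w|^2$ (and likewise for $b$), I would combine the $\nabla a\cdot\nabla w$ pieces into $2(p-\beta)e^{(p-1)w}$, retain $e^{(p-1)w}\triangle a$, and crucially rewrite $a(p-1)e^{(p-1)w}\triangle w$ via $\triangle w = (\beta-1)|\nabla w|^2 - F/t$. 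The $-F/t$ part yields exactly $-a(p-1)e^{(p-1)w}F$ and $-b(q-1)e^{(q-1)w}F$, while the $(\beta-1)|\nabla w|^2$ part merges with the remaining quadratic contributions to give the coefficient $a(p-\beta)(p-1)e^{(p-1)w} + b(q-\beta)(q-1)e^{(q-1)w}$.

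Finally I would invoke the geometric hypotheses. I treat the two Hessian terms jointly: Young's inequality $2s\cdot\nabla^2 w \le \beta|\nabla^2 w|^2 + \tfrac{1}{\beta}|s|^2$ drops the Hessian coefficient from $2\beta$ to $\beta$, after which $|\nabla^2 w|^2 \ge \tfrac{1}{n}(\triangle w)^2$ with (\ref{eq2}) produces $\tfrac{\beta}{n}(w_t - |\nabla w|^2 - ae^{(p-1)w} - be^{(q-1)w})^2$, while $|s|^2 \le n(K_2+K_3)^2$ gives the constant $-\tfrac{n}{\beta}(K_2+K_3)^2$. The bounds $Ric \ge -K_1 g$ and $-K_2 g \le s \le K_3 g$ convert the curvature terms into $(2(\beta-1)K_3 - 2\beta K_1)|\nabla w|^2$, and $|\nabla s|\le K_4$, bounding $|\mathrm{div}\, s - \tfrac{1}{2}\nabla(tr_g s)| \le \tfrac{3}{2}K_4$, contributes the remaining $-\tfrac{3}{2}K_4|\nabla w|^2$ and $-\tfrac{3}{2}nK_4$. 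Multiplying the lower bound for $(\triangle - \partial_t)f$ by $t$ and restoring $-F/t$ gives the claim. The main obstacle is purely the coefficient bookkeeping of the third step — getting the exact factors $(p-\beta)$ and $(p-\beta)(p-1)$ to emerge from the interplay of the Bochner term, the $2(1-\beta)$ discrepancy, and the substitution for $\triangle w$ — together with choosing the Young constant in the last step so that the Hessian coefficient reduces to precisely $\beta$.
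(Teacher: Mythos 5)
Your proposal is correct and follows essentially the same route as the paper: the same two evolution identities, the Bochner formula, the substitution $\triangle w=(\beta-1)|\nabla w|^{2}-F/t$ to generate the $-a(p-1)e^{(p-1)w}F$ terms and the $(p-\beta)(p-1)$ coefficients, and the same Young/trace/AM--GM estimates, arriving at the identical intermediate inequality. The only differences are organizational (you route the second-order computation through $f=(\beta-1)|\nabla w|^{2}-\triangle w$ with the fourth-order terms cancelling, while the paper expands $\triangle w_{t}$ and $F_{t}$ directly) plus one cosmetic slip: the bound on $|\mathrm{div}\,s-\tfrac{1}{2}\nabla(\mathrm{tr}_{g}s)|$ should carry a factor $\sqrt{n}$, i.e.\ $\tfrac{3}{2}\sqrt{n}K_{4}$, which is exactly what makes AM--GM produce the constant $-\tfrac{3}{2}nK_{4}$ you correctly state.
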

\begin{proof}
Define
\[F=t\big(\beta |\nabla w|^2+ae^{(p-1)w}+be^{(q-1)w}-w_t \big)
\]
By the Bochner formula, we can write
\[\triangle |\nabla w|^2\geq 2|\nabla^2 w|^2 + 2\nabla w \nabla(\triangle w)-2K_1 |\nabla w|^2.
\]
Note that
\begin{align*}
\triangle w_t =&(\triangle w)_t+2s \nabla^2 w +2\nabla w \big( \mathrm{div} s-\dfrac{1}{2}\nabla (tr_g s)\big)\\
=&w_{tt}-(|\nabla w|^2)_t-a_t e^{(p-1)w}-ae^{(p-1)w}-b_t e^{(q-1)w}-be^{(q-1)w}\\
&+2s \nabla^2 w +2\nabla w \big( \mathrm{div} s-\dfrac{1}{2}\nabla (tr_g s)\big)\\
=&2s(\nabla w,\nabla w)-2\nabla w\nabla w_t -a_t e^{(p-1)w}-ae^{(p-1)w}-b_t e^{(q-1)w}-be^{(q-1)w}\\
&+w_{tt}+2s \nabla^2 w +2\nabla w \big( \mathrm{div} s-\dfrac{1}{2}\nabla (tr_g s)\big)\\
\end{align*}
and
\begin{align*}
\triangle w &=-|\nabla w |^2-ae^{(p-1)w} -be^{(q-1)w} +w_t\\
&=(\dfrac{1}{\beta}-1)\big( ae^{(p-1)w} +be^{(q-1)w} -w_t\big)-\dfrac{F}{t\beta}\\
&=(\beta -1)|\nabla w |^2-\dfrac{F}{t}
\end{align*}
we can write,
\[\triangle F=t\Big(\beta \triangle |\nabla w|^2+\triangle(ae^{(p-1)w})+\triangle(be^{(q-1)w})-\triangle w_t  \Big) \]
According to the above computations, we obtain
\begin{align*}
\beta \triangle |\nabla w|^2 \geq &2\beta |\nabla^2 w|^2 + 2\beta \nabla w \nabla(\triangle w)-2\beta K_1 |\nabla w|^2\\
=&2\beta |\nabla^2 w|^2 + 2\beta \nabla w \nabla \Big([(\dfrac{1}{\beta}-1)\big( ae^{(p-1)w} +be^{(q-1)w} -w_t\big)-\dfrac{F}{t\beta}]\Big)\\-&2\beta K_1 |\nabla w|^2\\
=&2\beta|\nabla^{2}w|^{2}-\frac{2}{t}\nabla w\nabla F+2(1-\beta)e^{(p-1)w}\nabla w\nabla a
+2(1-\beta)e^{(q-1)w}\nabla w\nabla b\\
&+2a(1-\beta)(p-1)e^{(p-1)w}|\nabla w|^{2}+2b(1-\beta)(q-1)e^{(q-1)w}|\nabla w|^{2}\\
&+2(1-\beta)\nabla w\nabla w_{t}-2K_{1}\beta |\nabla w|^{2}
\end{align*}
and, we know
\begin{eqnarray*}
\triangle(ae^{(p-1)w})&=&e^{(p-1)w}\triangle a+2(p-1)e^{(p-1)w}\nabla w\nabla a+a(p-1)^{2}e^{(p-1)w}|\nabla w|^{2}\\&&+a(p-1)e^{(p-1)w}\triangle w\\
&=&e^{(p-1)w}\triangle a+2(p-1)e^{(p-1)w}\nabla w\nabla a+a(p-1)^{2}e^{(p-1)w}|\nabla w|^{2}\\&&+a(p-1)e^{(p-1)w}[(\beta-1)|\nabla w|^{2}-\frac{F}{t}]
\end{eqnarray*}
so we have
\begin{eqnarray*}
\triangle F&\geq& t\Big\{2\beta|\nabla^{2}w|^{2}-\frac{2}{t}\nabla w\nabla F+2(1-\beta)e^{(p-1)w}\nabla w\nabla a
+2(1-\beta)e^{(q-1)w}\nabla w\nabla b\\
&&+2a(1-\beta)(p-1)e^{(p-1)w}|\nabla w|^{2}+2b(1-\beta)(q-1)e^{(q-1)w}|\nabla w|^{2}\\&&
+2(1-\beta)\nabla w\nabla w_{t}-2k_{1}\beta |\nabla w|^{2}
+e^{(p-1)w}\triangle a+2(p-1)e^{(p-1)w}\nabla w\nabla a\\&&+a(p-1)^{2}e^{(p-1)w}|\nabla w|^{2}+a(p-1)e^{(p-1)w}[(\beta-1)|\nabla w|^{2}-\frac{F}{t}]\\&&
+e^{(q-1)w}\triangle b+2(q-1)e^{(q-1)w}\nabla w\nabla b+b(q-1)^{2}e^{(q-1)w}|\nabla w|^{2}\\&&+b(q-1)e^{(q-1)w}[(\beta-1)|\nabla w|^{2}-\frac{F}{t}]
-\Big[ w_{tt}-(|\nabla w|^{2})_{t}-a_{t}e^{(p-1)w}\\&&-a(p-1)e^{(p-1)w}w_{t}-b_{t}e^{(q-1)w}-b(q-1)e^{(q-1)w}w_{t}+2s\nabla^{2}w\\&&+2\nabla w(\mathrm{div} s-\dfrac{1}{2}\nabla (tr_g s))\Big]\Big\}
\end{eqnarray*}
and
\begin{eqnarray*}
F_{t}&=& \frac{F}{t}+t\Big\{2\beta (|\nabla w|^{2})_t+a_{t}e^{(p-1)w}+a(p-1)e^{(p-1)w}w_{t}+b_{t}e^{(q-1)w}\\
&+&b(q-1)e^{(q-1)w}w_{t}-w_{tt}\Big\}\\
&=&\frac{F}{t}+t\Big\{2\beta\nabla w\nabla w_{t}-2\beta s(\nabla w,\nabla w)+a_{t}e^{(p-1)w}+a(p-1)e^{(p-1)w}w_{t}+b_{t}e^{(q-1)w}\\&&+b(q-1)e^{(q-1)w}w_{t}-w_{tt}\Big\}
\end{eqnarray*}
This equation implies that
\begin{eqnarray*}
\Box F&=&(\triangle -\frac{\partial}{\partial t})F\geq -2\nabla w\nabla F+t\Big\{2\beta|\nabla^{2}w|^{2}+2(\beta-1) s(\nabla w,\nabla w)\\&&+a(p-\beta)(p-1)e^{(p-1)w}|\nabla w|^{2}+b(q-\beta)(q-1)e^{(q-1)w}|\nabla w|^{2}\\&&
+2(p-\beta)e^{(p-1)w}\nabla w\nabla a+2(q-\beta)e^{(q-1)w}\nabla w\nabla b+e^{(p-1)w}\triangle a+e^{(q-1)w}\triangle b\\&&-2K_{1}\beta |\nabla w|^{2}-2s\nabla^{2}w-2\nabla w(\mathrm{div} s-\dfrac{1}{2}\nabla (tr_g s))\Big\}-a(p-1)e^{(p-1)w}F\\&&-b(q-1)e^{(q-1)w}F-\frac{F}{t}
\end{eqnarray*}
By our assumptions, we have
\[
-(K_{2}+K_{3})g\leq s\leq(K_{2}+K_{3})g\]
which implies that
\[
|s|^{2}\leq (K_{2}+K_{3})^{2}|g|^{2}=n(K_{2}+K_{3})^{2}
\]
Using Young's inequality and applying those bounds yields
\begin{equation*}
|s\nabla^{2} w|\leq \frac{\beta}{2}|\nabla^{2}w|^{2}+\frac{1}{2\beta}|s|^{2}\leq  \frac{\beta}{2}|\nabla^{2}w|^{2}+\frac{n}{2\beta}(K_{2}+K_{3})^{2}
\end{equation*}
on the other hand,
\begin{equation*}
|\mathrm{div} s-\dfrac{1}{2}\nabla (tr_g s)|=|g^{ij}\nabla_{i}s_{jl}-\frac{1}{2}g^{ij}\nabla_{l}s_{ij}|\leq \frac{3}{2}|g||\nabla s|\leq \frac{3}{2}\sqrt{n}K_{4}
\end{equation*}
Finally, with the help of the following inequality, 
\begin{equation*}
|\nabla^{2}w|^{2}\geq \frac{1}{n}(tr \nabla^{2} w)^{2}=\frac{1}{n}(\triangle w)^{2}=\frac{1}{n}(-|\nabla w |^2-ae^{(p-1)w} -be^{(q-1)w} +w_t)^{2}
\end{equation*}
we obtain
\begin{align*}
(\triangle -\frac{\partial}{\partial t})F\\
&\geq -2\nabla w\nabla F+t\Big\{\dfrac{\beta}{n}(w_t-|\nabla w |^2-ae^{(p-1)w} -be^{(q-1)w})^{2}\\
&+a(p-\beta)(p-1)e^{(p-1)w}|\nabla w|^{2}+b(q-\beta)(q-1)e^{(q-1)w}|\nabla w|^{2}\\
&+2(p-\beta)e^{(p-1)w}\nabla w\nabla a+2(q-\beta)e^{(q-1)w}\nabla w\nabla b+e^{(p-1)w}\triangle a+e^{(q-1)w}\triangle b\\
&+2(\beta -1)K_3|\nabla w|^2 -2\beta K_1|\nabla w|^2 -\dfrac{n}{\beta}(K_2 +K_3)^2\\
&-3\sqrt{n}K_4 |\nabla w | \Big\}-a(p-1)e^{(p-1)w}F-b(q-1)e^{(q-1)w}F-\frac{F}{t}
\end{align*}
Applying AM-GM inequality, we can write
\begin{equation*}
3\sqrt{n}K_4 |\nabla w | \leq 3K_4(\dfrac{n}{2}+\dfrac{|\nabla w|^2}{2})       
\end{equation*}
we get
\begin{align*}
(\triangle -\frac{\partial}{\partial t})F\\
&\geq -2\nabla w\nabla F+t\Big\{\dfrac{\beta}{n}(w_t-|\nabla w |^2-ae^{(p-1)w} -be^{(q-1)w})^{2}\\
&+[a(p-\beta)(p-1)e^{(p-1)w}+b(q-\beta)(q-1)e^{(q-1)w}+2(\beta -1)K_3\\
 &-2\beta K_1 -\dfrac{3}{2}K_4]|\nabla w|^2 
+2(p-\beta)e^{(p-1)w}\nabla w\nabla a+2(q-\beta)e^{(q-1)w}\nabla w\nabla b\\
&+e^{(p-1)w}\triangle a+e^{(q-1)w}\triangle b
-n\big( \dfrac{1}{\beta}(K_2 +K_3)^2 +\dfrac{3}{2}K_4\big)
\Big\}\\
&-a(p-1)e^{(p-1)w}F-b(q-1)e^{(q-1)w}F-\frac{F}{t}
\end{align*}
This completes the proof.
\end{proof}
Let's take a cut-off function $\tilde{\varphi}$ defined on $[0,\infty )$ such that $0\leq \tilde{\varphi} (r)\leq 1$, $\tilde{\varphi} (r)=1$ for $r\in [0,1]$ and, $\tilde{\varphi} (r)=0$ for $r\in [2,\infty) $. Furthermore $\tilde{\varphi}$ satisfies the following inequalities for some positive constants $c_1$ and $c_2$.
\[-\dfrac{\tilde{\varphi}'(r)}{\tilde{\varphi}^{1\over 2}(r)}\leq c_1 ,\qquad \tilde{\varphi}''(r)\geq -c_2\]
Define $r(x,t):=d(x,x_0,t)$ and, set
\[\varphi (x,t)=\tilde{\varphi} (\dfrac{r(x,t)}{R})\]
Using a similar argument of Calabi \cite{Calabi}, we can assume $\varphi (x,t)\in C^2 (M)$ with support in $Q_{2R,T}$. A direct calculation indicates that on $Q_{2R,T}$, we have
\begin{equation}\label{eq1}
\dfrac{|\nabla \varphi |^2}{\varphi}\leq \dfrac{c_1^2}{R^2}
\end{equation}
According to the Laplace comparison theorem in \cite{1}, we can write
\begin{equation}\label{eq2}
\triangle \varphi \geq -\dfrac{(n-1)(1+\sqrt{K_1}R)c_1^2+c^2}{R^2}
\end{equation}
For any $0<T_1<T$, suppose that $\varphi F$ attains it maximum value at the point $(x_0,t_0)$ in the cube $Q_{2R,T_1}$. We can assume that this maximum value is positive (otherwise the proof of our main theorem will be trivial). At the maximum point $(x_0,t_0)$, we have
\[\nabla (\varphi F)=0,\quad \triangle (\varphi F)\leq 0, \quad (\varphi F)_t\geq 0, \]
which follows that
\[0\geq (\triangle -\dfrac{\partial}{\partial t})(\varphi F)=(\triangle \varphi)F-\varphi_t F+\varphi (\triangle -\dfrac{\partial}{\partial t})F+2\nabla \varphi \nabla F
\]
So, we can write
\begin{equation}\label{eq3}
(\triangle \varphi)F-\varphi_t F+\varphi (\triangle -\dfrac{\partial}{\partial t})F-2F\varphi^{-1} |\nabla \varphi |^2\leq 0.
\end{equation}
Also, we know (see \cite{lem}, p. 494) there exists a positive constant $c_3$ such that
$$ -\varphi_t F\geq -\sqrt{c_3}K_2F.$$
The inequality (\ref{eq3}) together with the inequalities (\ref{eq1}) and (\ref{eq2}) yield
\begin{equation}\label{eq4}
\varphi (\triangle-\dfrac{\partial}{\partial t})F\leq HF,\end{equation}
where 
$$H=\dfrac{(n-1)(1+\sqrt{K_1}R)c_1^2+c_2+2c_1^2}{R^2}+\sqrt{c_3}K_2$$
\textbf{Proof of Proposition (\ref{maintheorem}):}
At the maximum point $(x_0,t_0)$, by (\ref{eq4}) and Lemma \ref{lem}, we have
\begin{align*}
&0\geq \varphi (\triangle -\dfrac{\partial}{\partial t})F-HF \geq -HF +\varphi \{ -2\nabla w\nabla F+\dfrac{\beta t_0}{n}(w_t-|\nabla w |^2-ae^{(p-1)w} \\
&-be^{(q-1)w})^{2}+t_0[a(p-\beta)(p-1)e^{(p-1)w}+b(q-\beta)(q-1)e^{(q-1)w}+2(\beta -1)K_3\\
 &-2\beta K_1 -\dfrac{3}{2}K_4]|\nabla w|^2 
+2t_0 (p-\beta)e^{(p-1)w}\nabla w\nabla a+2t_0 (q-\beta)e^{(q-1)w}\nabla w\nabla b\\
&+t_0 e^{(p-1)w}\triangle a+t_0 e^{(q-1)w}\triangle b
-nt_0\big( \dfrac{1}{\beta}(K_2 +K_3)^2 +\dfrac{3}{2}K_4\big)\\
&-a(p-1)e^{(p-1)w}F-b(q-1)e^{(q-1)w}F-\frac{F}{t_0} \}\\
\geq & -HF +2F\nabla w\nabla \varphi +\dfrac{\beta t_0}{n}\varphi (w_t-|\nabla w |^2-ae^{(p-1)w} -be^{(q-1)w})^{2}\\
&+t_0  \varphi [a(p-\beta)(p-1)e^{(p-1)w}+b(q-\beta)(q-1)e^{(q-1)w}+2(\beta -1)K_3\\
 &-2\beta K_1 -\dfrac{3}{2}K_4]|\nabla w|^2 
+2t_0 \varphi (p-\beta)e^{(p-1)w}\nabla w\nabla a+2t_0 \varphi (q-\beta)e^{(q-1)w}\nabla w\nabla b\\
&+t_0 \varphi e^{(p-1)w}\triangle a+t_0 \varphi e^{(q-1)w}\triangle b
-nt_0 \varphi \big( \dfrac{1}{\beta}(K_2 +K_3)^2 +\dfrac{3}{2}K_4\big)\\
&-a(p-1)e^{(p-1)w}\varphi F-b(q-1)e^{(q-1)w}\varphi F-\varphi t_0^{-1}F\\
\geq & -HF +2F\nabla w\nabla \varphi +\dfrac{\beta t_0}{n}\varphi (w_t-|\nabla w |^2-ae^{(p-1)w} -be^{(q-1)w})^{2}\\
&-t_0  \varphi [|a|(p-\beta)(p-1)M_1^{(p-1)}+|b|(q-\beta)(q-1)M_1^{(q-1)}+2(1-\beta )K_3\\
 &+2\beta K_1 +\dfrac{3}{2}K_4]|\nabla w|^2
 +2t_0 \varphi (\beta-p)M_1^{(p-1)}\gamma_a |\nabla w|+2t_0 \varphi (\beta-q)M_1^{(q-1)}\gamma_b |\nabla w|\\
 &-t_0 \varphi M_1^{(p-1)}\theta_a -t_0 \varphi M_1^{(q-1)}\theta_b
-nt_0 \varphi \big( \dfrac{1}{\beta}(K_2 +K_3)^2 +\dfrac{3}{2}K_4\big)\\
&+|a|(p-1)M_1^{(p-1)}\varphi F+|b|(q-1)M_1^{(q-1)}\varphi F-\varphi t_0^{-1}F\\
\geq &  -HF +2F\nabla w\nabla \varphi +\dfrac{\beta t_0}{n}\varphi (w_t-|\nabla w |^2-ae^{(p-1)w} -be^{(q-1)w})^{2}\\
&-t_0  \varphi [|a|(p-\beta)(p-1)M_1^{(p-1)}+|b|(q-\beta)(q-1)M_1^{(q-1)}]|\nabla w|^2 \\
&-t_0  \varphi [2(1-\beta )K_3
 +2\beta K_1 +\dfrac{3}{2}K_4]|\nabla w|^2 -t_0\varphi [
 2(p-\beta)M_1^{(p-1)}\gamma_a \\&+2(q-\beta)M_1^{(q-1)}\gamma_b]|\nabla w|\\
 &-t_0 \varphi [M_1^{(p-1)}\theta_a + M_1^{(q-1)}\theta_b
+n \big( \dfrac{1}{\beta}(K_2 +K_3)^2 +\dfrac{3}{2}K_4\big)]\\
&+|a|(p-1)M_1^{(p-1)}\varphi F+|b|(q-1)M_1^{(q-1)}\varphi F-\varphi t_0^{-1}F
\end{align*}
For the sake of simplicity, set
\begin{align*}
\widetilde{C_1}&=2(1-\beta )K_3 +2\beta K_1 +\dfrac{3}{2}K_4
\\
\widetilde{C_2}&=M_1^{(p-1)}\theta_a + M_1^{(q-1)}\theta_b
+n \big( \dfrac{1}{\beta}(K_2 +K_3)^2 +\dfrac{3}{2}K_4\big)
\end{align*}
and
\[\widetilde{C_3}=-\dfrac{[
 (p-\beta)M_1^{(p-1)}\gamma_a +(q-\beta)M_1^{(q-1)}\gamma_b]^2}{|a|(p-\beta)(p-1)M_1^{(p-1)}+|b|(q-\beta)(q-1)M_1^{(q-1)}}
\]
Using the inequality $ax^2+bx\leq -\dfrac{b^2}{4a}$ which holds for $a<0$, we obtain
\begin{align*}
0\geq & -HF +2F\nabla w\nabla \varphi +\dfrac{\beta t_0}{n}\varphi (w_t-|\nabla w |^2-ae^{(p-1)w} -be^{(q-1)w})^{2}\\
&-t_0 \varphi [\widetilde{C_3}+\widetilde{C_2}+\widetilde{C_1}|\nabla w|^2]
+|a|(p-1)M_1^{(p-1)}\varphi F+|b|(q-1)M_1^{(q-1)}\varphi F-\varphi t_0^{-1}F
\end{align*}
Noting the fact that $0<\varphi <1$ and multiplying both sides of the above inequality by $t_0 \varphi$, leads to
\begin{align*}
0 \geq &-Ht_0\varphi F+2t_0\varphi F\nabla w\nabla \varphi +\dfrac{\beta t_0^2}{n}\varphi^2 (w_t-|\nabla w |^2-ae^{(p-1)w} -be^{(q-1)w})^{2}\\
-\widetilde{C_1}t_0^2&\varphi^2 |\nabla w|^2 -(\widetilde{C_2}+\widetilde{C_3})t_0^2\varphi^2 +|a|(p-1)M_1^{(p-1)}t_0\varphi F+|b|(q-1)M_1^{(q-1)}t_0\varphi F-\varphi F\\
\geq -Ht_0&\varphi F -\dfrac{2c_1}{R}t_0\varphi F|\nabla w|\varphi^{3\over 2} +|a|(p-1)M_1^{(p-1)}t_0\varphi F+|b|(q-1)M_1^{(q-1)}t_0\varphi F-\varphi F\\
& +\dfrac{\beta t_0^2}{n}\varphi^2 [(w_t-|\nabla w |^2-ae^{(p-1)w} -be^{(q-1)w})^{2} -\dfrac{n}{\beta}\widetilde{C_1} |\nabla w|^2 ]-(\widetilde{C_2}+\widetilde{C_3})t_0^2\varphi^2 
\end{align*}
where in the last inequality the following fact is applied 
\[-2\varphi \nabla w \nabla F=2F\nabla w \nabla \varphi \geq -2F|\nabla w||\nabla\varphi|\geq -\dfrac{2c_1}{R}\varphi^{1\over 2}F|\nabla w| \]
Assume that
\[ y=\varphi |\nabla w|^2, \qquad z=\varphi (ae^{(p-1)w} +be^{(q-1)w}-w_t) \]
So, we can write
\begin{align*}
0\geq & \varphi F (-Ht_0+|a|(p-1)M_1^{(p-1)}t_0+|b|(q-1)M_1^{(q-1)}t_0 -1 )-\dfrac{2c_1}{R}t_0 F|\nabla w|\varphi^{3\over 2}\\
& +\dfrac{\beta t_0^2}{n}\varphi^2 [(w_t-|\nabla w |^2-ae^{(p-1)w} -be^{(q-1)w})^{2} -\dfrac{n}{\beta}\widetilde{C_1} |\nabla w|^2 ]-(\widetilde{C_2}+\widetilde{C_3})t_0^2\varphi^2 \\
\geq & \varphi F (-Ht_0+|a|(p-1)M_1^{(p-1)}t_0+|b|(q-1)M_1^{(q-1)}t_0 -1 )\\
&+\dfrac{\beta t_0^2}{n}\{ (y-z)^2-\dfrac{n}{\beta}\widetilde{C_1}y-2nc_1 R^{-1} y^{1\over 2}(y-\dfrac{1}{\beta}z )\}-(\widetilde{C_2}+\widetilde{C_3})t_0^2.
\end{align*}
For all $a,b>0$ the inequality $ax^2-bx\geq -\dfrac{b^2}{4a}$ holds for every real number $x$. Using this inequality, we obtain
\begin{align*}
&\dfrac{\beta t_0^2}{n}\{ (y-z)^2-\dfrac{n}{\beta}\widetilde{C_1}y-2nc_1 R^{-1} y^{1\over 2}(y-\dfrac{1}{\beta}z )\}\\
=&\dfrac{\beta t_0^2}{n}\{ \beta^2 (y-\dfrac{z}{\beta} )^2 +(1-\beta^2)y^2 -\dfrac{n}{\beta}\widetilde{C_1}y+[2(\beta-\beta^2) y-2\dfrac{nc_1}{R} y^{1\over 2}](y-\dfrac{z}{\beta})\}\\
\geq & \dfrac{\beta t_0^2}{n} \{ \beta^2 (y-\dfrac{z}{\beta} )^2  -\dfrac{n^2 \widetilde{C_1}^2}{4\beta^2 (1-\beta)^2}-\dfrac{n^2c_1^2}{2R^2(\beta -\beta^2)} (y-\dfrac{z}{\beta}) \}\\
=& \dfrac{\beta}{n}(\varphi F)^2  -\dfrac{n \widetilde{C_1}^2 t_0^2}{4\beta (1-\beta)^2}-\dfrac{n c_1^2 t_0}{2R^2(\beta -\beta^2)} (\varphi F).
\end{align*}
Hence,
\begin{align*}
&\dfrac{\beta}{n}(\varphi F)^2+\big[ -Ht_0+|a|(p-1)M_1^{(p-1)}t_0+|b|(q-1)M_1^{(q-1)}t_0 -1 -\dfrac{n c_1^2 t_0}{2R^2(\beta -\beta^2)}\big](\phi F)\\&- \dfrac{n \widetilde{C_1}^2 t_0^2}{4\beta (1-\beta)^2}-(\widetilde{C_2}+\widetilde{C_3})t_0^2 \leq 0.
\end{align*}
As we know, the inequality $Ax^2-2Bx\leq C$, yields $x\leq \dfrac{2B}{A}+\sqrt{\dfrac{C}{A} }$. So, we get
\begin{align*}
\varphi F \leq \dfrac{n}{\beta} &\Big( Ht_0+|a|(1-p)M_1^{(p-1)}t_0+|b|(1-q)M_1^{(q-1)}t_0 +1 +\dfrac{n c_1^2 t_0}{2R^2(\beta -\beta^2)}\Big)\\
&+\Big[\dfrac{n}{\beta}( \dfrac{n \widetilde{C_1}^2 }{4\beta (1-\beta)^2}+\widetilde{C_2}+\widetilde{C_3}) \Big]^{1\over 2}t_0.
\end{align*}
If $d(x,x_0,T_1),R$, we know that $\varphi (x,T_1)=1$. Then
\begin{align*}
F(x,T_1)&=T_1 (\beta |\nabla w|^2+ae^{(p-1)w}+be^{(q-1)w}-w_t )\\
&\leq \varphi F(x_0 ,t_0)\\
&\leq \dfrac{n}{\beta} \Big( Ht_0+|a|(1-p)M_1^{(p-1)}t_0+|b|(1-q)M_1^{(q-1)}t_0 +1 +\dfrac{n c_1^2 t_0}{2R^2(\beta -\beta^2)}\Big)\\
&+\Big[\dfrac{n}{\beta}( \dfrac{n \widetilde{C_1}^2 }{4\beta (1-\beta)^2}+\widetilde{C_2}+\widetilde{C_3}) \Big]^{1\over 2}t_0.
\end{align*}
Since $T_1$ was supposed to be arbitrary, we can get the assertion.
\\
\textbf{Proof of Proposition (\ref{mtheorem}):}
For any points $(x_1,t_1)$ and $(x_2,t_2)$ on $M\times [0,T)$ with $0<t_1<t_2$, we take a curve $\gamma(t)$ parametrized with $\gamma (t_1)=x_1$ and $\gamma(t_2)=x_2$. In the ray of Corollary (\ref{corol}), one can get
\begin{align*}
&\log u(x_2,t_2)-\log u(x_1,y_1)\\
=& \int_{t_1}^{t_2} \big( (\log u)_t +\langle \nabla \log u ,\gamma' \rangle \big) dt\\
\geq & \int_{t_1}^{t_2} \Big( \beta |\nabla \log u|^2+au^{p-1}+bu^{q-1}-\overline{H_1}-H_2-\dfrac{n}{\beta t} -|\nabla \log u||\gamma'| \Big)dt\\
\geq & -\int_{t_1}^{t_2} \big( \dfrac{1}{4\beta}|\gamma'|^2 -au^{p-1}-bu^{q-1}+\overline{H_1}+H_2+\dfrac{n}{\beta t}\Big)dt\\
\geq & -\Big(\log(\dfrac{t_2}{t_1} )^{n\over \beta}+ (\overline{H_1}+H_2)(t_2-t_1)+\int_{t_1}^{t_2} \dfrac{1}{4\beta}|\gamma'|^2 dt\Big)
\end{align*}
which means
\[
\log \dfrac{u(x_1,t_1)}{u(x_2,t_2)}\leq \log(\dfrac{t_2}{t_1} )^{n\over \beta}+ (\overline{H_1}+H_2)(t_2-t_1)+\int_{t_1}^{t_2} \dfrac{1}{4\beta}|\gamma'|^2 dt
\]
Hence,
\[
u(x_1,t_1)\leq u(x_2,t_2)(\dfrac{t_2}{t_1} )^{n\over \beta}e^{\Psi (x_1,x_2,t_1,t_2)+(\overline{H_1}+H_2)(t_2-t_1)}
\]
where $\Psi (x_1,x_2,t_1,t_2)=\inf_{\gamma}\int_{t_1}^{t_2} \dfrac{1}{4\beta}|\gamma'|^2 dt$, and $\overline{H_1}=\dfrac{n}{\beta}\big(\sqrt{c_3}K_2+ a(1-p)M_1^{(p-1)} +b(1-q)M_1^{(q-1)} \big)$, and
\begin{align*}
H_2 &=\Big[\dfrac{n^2 }{4\beta^2 (1-\beta)^2}\big( 2(1-\beta )K_3 +2\beta K_1 +\dfrac{3}{2}K_4 \big)^2\\
&+\dfrac{n}{\beta}\{M_1^{(p-1)}\theta_a + M_1^{(q-1)}\theta_b
+n \big( \dfrac{1}{\beta}(K_2 +K_3)^2 +\dfrac{3}{2}K_4\big)\}\\
&-\dfrac{n}{\beta}\{ \dfrac{[
 (p-\beta)M_1^{(p-1)}\gamma_a +(q-\beta)M_1^{(q-1)}\gamma_b]^2}{a(p-\beta)(p-1)M_1^{(p-1)}+b(q-\beta)(q-1)M_1^{(q-1)}}\} \Big]^{1\over 2}
\end{align*}

\end{document}